\documentclass[]{amsart}

\usepackage{graphicx} 
\usepackage{amsmath}
\usepackage{amsthm}
\usepackage{amsfonts}
\usepackage{amssymb}
\usepackage{enumitem}
\usepackage{comment}
\usepackage{mathtools}

\usepackage[style=numeric,sorting=nyt,backend=bibtex8, maxbibnames=99]{biblatex}
\bibliography{biblio.bib}

\usepackage{hyperref}
\usepackage{scalerel}
\newcommand\snwarrow{\mathrel{\scaleobj{0.7}{\nwarrow}}}
\newcommand\snearrow{\mathrel{\scaleobj{0.7}{\nearrow}}}
\newcommand\sswarrow{\mathrel{\scaleobj{0.7}{\swarrow}}}
\newcommand\ssearrow{\mathrel{\scaleobj{0.7}{\searrow}}}

\usepackage{xcolor}
\usepackage[author=L, status=final]{fixme}
\fxsetup{targetlayout=color}
\definecolor{fxtarget}{HTML}{007ACC}
\fxsetup{nomargin,nomarginclue,noinline,nofootnote,noindex}

\newtheorem{theorem}{Theorem}[]
\newtheorem{proposition}[theorem]{Proposition}
\newtheorem{corollary}[theorem]{Corollary}
\newtheorem{lemma}[theorem]{Lemma}
\newtheorem{claim}{Claim}[theorem]
\theoremstyle{definition}

\newtheorem*{theorem*}{Theorem}
\newtheorem*{corollary*}{Corollary}

\newtheorem{definition}[theorem]{Definition}

\theoremstyle{remark}

\begin{document}

\title{Open Colorings and Baumgartner's Axiom}

\author{Lorenzo Notaro}
\address{University of Vienna, Institute of Mathematics, Kurt G\"{o}del Research Center, Kolingasse 14-16, 1090 Vienna, Austria}
\curraddr{}
\email{lorenzo.notaro@univie.ac.at}
\date{}
\thanks{This research was funded in whole or in part by the Austrian Science Fund (FWF) \href{https://www.fwf.ac.at/en/research-radar/10.55776/ESP1829225}{10.55776/ESP1829225}. For open access purposes, the author has applied a CC BY public copyright license to any author accepted manuscript version arising from this submission.}

\subjclass[2020]{Primary 03E35, Secondary 03E05, 03E50}
\keywords{Baumgartner's Axiom, Entangled sets, Increasing sets, Open Coloring Axiom, Martin's Axiom}

\begin{abstract}
We construct a model of $\mathsf{MA_{\aleph_1}}+\mathsf{OCA}_T$ where Baumgartner's Axiom fails, settling a question of Farah \cite[Question~(a)]{MR1440716}. Moreover, in the same model there is an $\aleph_1$-dense set of reals which is neither reversible nor increasing, answering a question of Marun, Shelah, and Switzer \cite[Question~4.6]{marun2025baumgartnersaxiomsmallposets}.
\end{abstract}

\maketitle

\section{Introduction}

Todor\v{c}evi\'{c}'s Open Coloring Axiom, here denoted by $\mathsf{OCA}_{T}$, is the following statement:\vspace{0.5em}

\begin{center}
  \begin{minipage}{\dimexpr\linewidth-5em}
    For every separable metrizable space $S$ and every open graph $G \subseteq [S]^2$,  one of the following holds:
  \begin{enumerate}
  \item $\chi(G) \le \aleph_0$, or
  \item there exists an uncountable $G$-clique.
  \end{enumerate}\vspace{0.5em}
  \end{minipage}
\end{center}

This Ramsey-type dichotomy has a wide range of applications, many of which concern rigidity phenomena for quotient structures---for instance, quotient Boolean algebras, corona algebras, and the Calkin algebra \cite{MR4288216, MR2776359, MR4642565, MR4290495, MR1999941, MR1202874}. A number of these applications are obtained from the combination $\mathsf{MA}_{\aleph_1} + \mathsf{OCA}_{T}$.

The principle $\mathsf{OCA}_{T}$ is independent of $\mathsf{ZFC}$: it is false under $\mathsf{CH}$, it is consistent relative to $\mathsf{ZF}$, and it is implied by $\mathsf{PFA}$ \cite{MR980949, MR1233818}. It was inspired by similar principles introduced and studied by Abraham, Rubin, and Shelah \cite{MR801036}. One of these principles bears the same name; we denote it by  $\mathsf{OCA}_{ARS}$ (see Section~\ref{sec:ARS}).

In \cite{MR1440716}, Farah investigated why $\mathsf{MA}_{\aleph_1}$ is sometimes needed to complete arguments carried out under $\mathsf{OCA}_{T}$. In particular, he studied the relationship between $\mathsf{OCA}_{T}$ and Baumgartner's Axiom ($\mathsf{BA}$), which asserts that every two $\aleph_1$-dense sets of reals are order-isomorphic. Baumgartner's Axiom is a natural uncountable analogue of Cantor's isomorphism theorem, which says that any two countable dense linear orders without endpoints are isomorphic. Baumgartner proved that $\mathsf{BA}$ is consistent relative to $\mathsf{ZF}$ in his seminal paper \cite{MR317934} and that it follows from $\mathsf{PFA}$ \cite{MR776640}. 

Farah proved in \cite{MR1440716} that $\mathsf{BA}$ does not follow from $\mathsf{OCA}_{T}$ (see also \cite{MR1978945}), and asked whether $\mathsf{BA}$ follows from $\mathsf{MA_{\aleph_1}} + \mathsf{OCA}_T$ \cite[Question (a)]{MR1440716}. Here we answer his question in the negative:

\begin{theorem}\label{thm:mainintro}
It is consistent relative to $\mathsf{ZF}$ that $\mathsf{MA_{\aleph_1}}+\mathsf{OCA}_T$ holds, but both $\mathsf{BA}$ and $\mathsf{OCA}_{ARS}$ fail.
\end{theorem}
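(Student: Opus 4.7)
The plan is to start in a ground model of $\mathsf{CH}$ (say $V=L$), construct by transfinite recursion an $\aleph_1$-dense entangled set of reals $X$ with strong preservation properties, and then perform a proper countable-support iteration of length $\omega_2$ that forces $\mathsf{MA}_{\aleph_1}+\mathsf{OCA}_T$ while keeping $X$ entangled in the extension. An $\aleph_1$-dense entangled $X$ witnesses both $\neg\mathsf{BA}$ (since $X$ is not order-isomorphic to its reverse $-X$, yet both are $\aleph_1$-dense sets of reals) and $\neg\mathsf{OCA}_{ARS}$ (the canonical open graph $G_X \subseteq [X]^2$ coding the entanglement has no countable chromatic decomposition and no uncountable $G_X$-clique of the kind demanded by the ARS dichotomy), so establishing the preservation and the forcing of the axioms would complete the proof.

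The first step, the construction of $X$ under $\mathsf{CH}$, is a standard Abraham--Rubin--Shelah-style recursion of length $\omega_1$: one enumerates all countable approximations to uncountable pairwise disjoint families in $X^{<\omega}$ paired with $0/1$-patterns, and dovetails with the density requirement that $X$ meets every open interval of $\mathbb{R}$ in $\aleph_1$ many points. To prepare for the iteration, one arranges that entangledness of $X$ is coded by a robust combinatorial invariant (for example, an $\omega$-entangled, \emph{stationarily coherent} variant) that is expected to be preserved by both the ccc iterands used for $\mathsf{MA}_{\aleph_1}$ and the proper iterands used for $\mathsf{OCA}_T$.

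The core technical step is the iteration $\langle P_\alpha, \dot Q_\alpha : \alpha < \omega_2 \rangle$, with countable support, whose iterands are alternately Todor\v{c}evi\'c's specialization poset for an open graph $G$ (drawn from a bookkept name for an open graph on a separable metrizable space) and a name for an $\aleph_1$-sized ccc poset coding an instance of $\mathsf{MA}_{\aleph_1}$. Standard bookkeeping of length $\omega_2$ handles every such open graph and ccc poset appearing in intermediate extensions, so in the final model $\mathsf{MA}_{\aleph_1}+\mathsf{OCA}_T$ holds. The main obstacle is the preservation of entangledness at successor stages: for the $\mathsf{MA}_{\aleph_1}$-iterands this follows from a Suslin-type productivity of the entanglement property with ccc forcings, but for the $\mathsf{OCA}_T$-iterands the specialization poset is only proper, not ccc, and could \emph{a priori} adjoin an uncountable monotone reflection of $X$ into $-X$. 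I would address this by proving a preservation theorem for entangledness under proper iterations with countable support, using a side-condition refinement of Todor\v{c}evi\'c's specialization forcing that forbids partial monotone matchings from $X$ into $-X$; a standard iteration lemma then carries the preservation through limits of cofinality $\omega$ and $\omega_1$, yielding that $X$ remains an $\aleph_1$-dense entangled set in $V^{P_{\omega_2}}$, and hence that $\mathsf{BA}$ and $\mathsf{OCA}_{ARS}$ both fail there.
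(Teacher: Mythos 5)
Your plan has a fatal flaw at its core: you propose to keep $X$ entangled (indeed $n$-entangled for all $n$) in the final model, but the final model is supposed to satisfy $\mathsf{OCA}_T$, and $\mathsf{OCA}_T$ outright refutes the existence of $2$-entangled sets (and of increasing sets); this is \cite[Proposition 8.4]{MR980949}, quoted in Section~\ref{sec:entangled}. So no preservation theorem, however clever its side conditions, can carry full entangledness of $X$ through an iteration forcing $\mathsf{OCA}_T$ --- the target invariant is provably destroyed. The paper's key move is precisely to replace entangledness by a strictly weaker property, \emph{nonstationary $2$-entangledness} (Definition~\ref{def:nsentangled}): one tolerates uncountable monotone injections on $E$ appearing in the extension, provided their domains are nonstationary with respect to a fixed enumeration $\vec{E}$. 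This weaker property still yields a non-reversible $\aleph_1$-dense set (so $\mathsf{BA}$ fails) and a non-$\sigma$-monotone injection (so $\mathsf{OCA}_{ARS}$ fails), and --- unlike entangledness --- it \emph{can} coexist with $\mathsf{OCA}_T$. A secondary but related error: the poset used to force instances of $\mathsf{OCA}_T$ is not ``only proper''; under $\mathsf{CH}$ Todor\v{c}evi\'{c}'s forcing $\mathcal{H}(Y,G)$ of finite cliques is ccc for a suitable uncountable $Y$, which is why the paper (following \cite{MR1233818}) uses a finite support ccc iteration with a diamond sequence rather than a countable support proper iteration.

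The second gap is in your treatment of the $\mathsf{MA}_{\aleph_1}$ iterands. There is no ``Suslin-type productivity'' to appeal to: the bookkeeping will eventually hand you a ccc poset that itself adds an uncountable monotone map defined on a stationary piece of $E$ (e.g., a poset of finite increasing partial maps from $E$ to $E^*$), and forcing with it destroys your witness no matter how it was constructed. One cannot preserve through such posets; one must refuse to force with them while still securing $\mathsf{MA}_{\aleph_1}$. This is exactly the Abraham--Shelah explicit contradiction method used in Proposition~\ref{prop:explicit}: if a ccc poset $\mathcal{P}$ fails to preserve $\vec{E}$, then the would-be monotone map it adds gives rise to an uncountably chromatic open graph $H$ on a subset of $E^2$, and Proposition~\ref{prop:preservation} supplies a ccc, $\vec{E}$-preserving poset $\mathcal{Q}$ adding an uncountable $H$-clique, which codes an uncountable antichain of $\mathcal{P}$; so $\mathcal{P}$ is no longer ccc in the extension and $\mathsf{MA}_{\aleph_1}$ owes it nothing. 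Your proposal contains no mechanism of this kind, and without it the iteration cannot simultaneously force $\mathsf{MA}_{\aleph_1}$ and keep any counterexample to $\mathsf{BA}$ alive. Finally, the limit stages are not routine either: the paper handles countable cofinality limits via $\sigma$-completeness of the nonstationary ideal and uncountable cofinality limits via a closure argument (Claim~\ref{claim:1}), both of which depend on the nonstationarity formulation you did not adopt.
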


It is worth noting that $\mathsf{OCA}_{ARS}$ is already known not to follow from $\mathsf{OCA}_{T}$ alone \cite{MR4228344}.

Our result also fits into a line of work showing that certain fragments of $\mathsf{PFA}$ do not suffice to prove $\mathsf{BA}$ (e.g., \cite{MR801036, avraham1981martin, MR3385104, MR4819970, MR4128471}). We emphasize that, in these models, the failure of $\mathsf{BA}$ is witnessed by the existence of either a $2$-entangled set of reals or an increasing set (see Section~\ref{sec:entangled}); consequently, these are all models in which $\mathsf{OCA}_{T}$ also fails (see again Section~\ref{sec:entangled}).

The failure of $\mathsf{BA}$ in the model we construct to prove Theorem~\ref{thm:mainintro} is witnessed by an $\aleph_1$-dense set of reals which is not reversible (i.e., not isomorphic to its reverse, see Section~\ref{sec:notation}). Since $\mathsf{OCA}_T$ prevents the existence of increasing sets, our model also yields a negative answer to a question of Marun, Shelah, and Switzer \cite[Question 4.6]{marun2025baumgartnersaxiomsmallposets}, who asked whether $\mathsf{MA}_{\aleph_1}$ implies that every $\aleph_1$-dense set of reals is either reversible or increasing\footnote{In \cite[Question~4.6]{marun2025baumgartnersaxiomsmallposets} ``increasing" is replaced by ``good" \cite[Definition 1.5]{marun2025baumgartnersaxiomsmallposets}. For $\aleph_1$-dense sets of reals, these two notions are equivalent: an $\aleph_1$-dense set is good (in the sense of \cite{marun2025baumgartnersaxiomsmallposets}) if and only if it is increasing (in the sense of \cite{MR801036}).}:

\begin{corollary}\label{cor:intro}
It is consistent relative to $\mathsf{ZF}$ that $\mathsf{MA_{\aleph_1}}$ holds and there exists an $\aleph_1$-dense set of reals which is neither increasing nor reversible.
\end{corollary}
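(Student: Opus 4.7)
The plan is to deduce Corollary~\ref{cor:intro} directly from Theorem~\ref{thm:mainintro} together with the fact, flagged in the introduction and presumably recalled in Section~\ref{sec:entangled}, that $\mathsf{OCA}_{T}$ precludes the existence of increasing $\aleph_1$-dense sets of reals. Once Theorem~\ref{thm:mainintro} is granted, the corollary should be a one-line packaging.

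First, I would pass to the model constructed for Theorem~\ref{thm:mainintro}. In that model $\mathsf{MA}_{\aleph_1}$ holds, so the first requirement of the corollary is met. Moreover, by the construction used to prove Theorem~\ref{thm:mainintro}, the failure of $\mathsf{BA}$ there is witnessed by an $\aleph_1$-dense set $X \subseteq \mathbb{R}$ that is not order-isomorphic to its reverse, i.e., $X$ is not reversible.

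Next I would invoke the incompatibility of $\mathsf{OCA}_{T}$ with increasing sets of reals: under $\mathsf{OCA}_{T}$, no $\aleph_1$-dense $X \subseteq \mathbb{R}$ can be increasing. Since our model satisfies $\mathsf{OCA}_{T}$ by Theorem~\ref{thm:mainintro}, this applies to the very same $X$ above, yielding that $X$ is also not increasing. Hence $X$ witnesses precisely what the corollary demands: an $\aleph_1$-dense set of reals which is neither reversible nor increasing, coexisting with $\mathsf{MA}_{\aleph_1}$.

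There is no genuine obstacle beyond Theorem~\ref{thm:mainintro} itself; all the combinatorial work has already been done there. The only auxiliary input is the (standard) proof that $\mathsf{OCA}_{T}$ excludes increasing $\aleph_1$-dense sets, which will be recorded in Section~\ref{sec:entangled} and applied verbatim here.
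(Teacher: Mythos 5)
Your proposal is correct and follows essentially the same route as the paper: the paper also derives Corollary~\ref{cor:intro} from the model witnessing Theorem~\ref{thm:mainintro} (built from Theorem~\ref{thm:main}), observing that the range of the nonstationarily $2$-entangled sequence is an $\aleph_1$-dense non-reversible set and that $\mathsf{OCA}_T$, which holds in that model, rules out increasing sets. The only point worth making explicit is that non-reversibility is a feature of the construction (the witnessing set), not of the bare statement of Theorem~\ref{thm:mainintro}, which you correctly acknowledge.
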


In Section~\ref{sec:prel}, we fix notation and recall the definitions of $\mathsf{OCA}_{ARS}$, $2$-entangled sets, and increasing sets. Then, in Section~\ref{sec:main}, we prove our result. We first define \emph{nonstationarily $2$-entangled} sequences of reals, which satisfy a property weaker than being $2$-entangled, but whose existence suffices to imply the failure of both $\mathsf{BA}$ and $\mathsf{OCA}_{ARS}$. In particular, the range of these sequences is a non-reversible $\aleph_1$-dense set of reals. We then force $\mathsf{MA_{\aleph_1}}+\mathsf{OCA}_T$ via a finite support iteration arranged so as to preserve the existence of a nonstationarily $2$-entangled sequence using the explicit contradiction method introduced by Abraham and Shelah in \cite{avraham1981martin}.

\section{Preliminaries}\label{sec:prel}

\subsection{Notation and Terminology}\label{sec:notation} Our set-theoretic notation is standard; see, e.g., \cite{MR1940513}. 

Given a set $X$ and an (undirected) graph $G\subseteq [X]^2$, we let $\chi(G)$ be the chromatic number of $G$. Moreover, if $X$ is a Hausdorff topological space, we say that $G \subseteq [X]^2$ is open if the set \[\{(a,b) \in X^2 \mid \{a,b\} \in G\}\] is open with respect to the product topology on $X^2$.

We write $f: X\rightharpoonup Y$ for a partial function $f$ whose domain is contained in $X$ and whose range is contained in $Y$. A real function $f:\mathbb{R} \rightharpoonup \mathbb{R}$ is said to be \emph{increasing} (resp. \emph{decreasing}) if it is order-preserving (resp. order-reversing)---i.e., $x \le y$ implies $f(x) \le f(y)$ (resp. $f(x) \ge f(y)$) for all $x,y \in \mathrm{dom}(f)$. A function is said to be \emph{monotone} if it is either increasing or decreasing.  A relation $R\subseteq \mathbb{R}^2$ is said to be \emph{non-decreasing} (resp. \emph{non-increasing}) if for all $(x,y), (x',y') \in R$, $x < x'$ implies $y \le y'$ (resp. $y \ge y'$).

Given a binary relation $R \subseteq X\times Y$ and some $x \in X$, we denote by $R_x$ the set $\{y \in Y \mid x \mathrel{R} y\}$. Similarly, given $y \in Y$,  we denote by $R^y$ the set $\{x \in X \mid x \mathrel{R} y\}$.

A set $A\subseteq\mathbb{R}$ without endpoints is $\aleph_1$-dense if it has cardinality $\aleph_1$ and between every two distinct elements of $A$ there are $\aleph_1$-many elements of $A$. Given an $\aleph_1$-dense $A\subseteq \mathbb{R}$, we denote its reverse by $A^* = \{-x \mid x \in A\}$. We say that $A$ is \emph{reversible} if $A \cong A^*$ as linear orders. 

Given some $p = (x,y) \in \mathbb{R}^2$, we let $p_0$ and $p_1$ be $x$ and $y$, respectively. Next, we introduce the following binary relations on $\mathbb{R}^2$: given $p,q \in \mathbb{R}^2$, let
\[
\begin{aligned}
p &\snearrow q \iff p_0 < q_0 \text{ and } p_1 < q_1, &
p &\ssearrow q \iff p_0 < q_0 \text{ and } p_1 > q_1,\\
p &\sswarrow q \iff p_0 > q_0 \text{ and } p_1 > q_1, &
p &\snwarrow q \iff p_0 > q_0 \text{ and } p_1 < q_1.
\end{aligned}
\]

\subsection{Abraham-Rubin-Shelah Open Coloring Axiom}\label{sec:ARS}
The principle $\mathsf{OCA}_{ARS}$, introduced by Abraham, Rubin, and Shelah in \cite{MR801036}, has the following statement:\vspace{0.5em}

\begin{center}
  \begin{minipage}{\dimexpr\linewidth-5em}
    If $X$ is a separable metrizable space of cardinality $\aleph_1$ and $c: [X]^2 \rightarrow \{1,\dots, n\}$ is a continuous map, then there are countably many $X_i$ ($i \in \omega$) such that $X = \bigcup_i X_i$ and $c$ is constant on each $[X_i]^2$.\vspace{0.5em}
  \end{minipage}
\end{center}

In particular, $\mathsf{OCA}_{ARS}$ implies that if $A\subseteq \mathbb{R}$ has size $\aleph_1$ and $f: A \rightarrow \mathbb{R}$ is injective, then $f$ is $\sigma$-monotone, i.e., it is the union of countably many monotone subfunctions. By an unpublished result of Todor\v{c}evi\'{c}  (reported in \cite{MR1978945}), the latter statement is actually equivalent to $\mathsf{OCA}_{ARS}$ under $\mathfrak{p} > \aleph_1$. 

The axioms $\mathsf{OCA}_T$ and $\mathsf{OCA}_{ARS}$ are mutually independent: $\mathsf{OCA}_{ARS}$ does not imply $\mathsf{OCA}_{T}$ (not even $\mathsf{OCA}_{T}(\aleph_1)$, the restriction of $\mathsf{OCA}_T$ to spaces of cardinality $\aleph_1$), since $\mathsf{OCA}_{T}(\aleph_1)$ refutes the existence of increasing sets \cite[Proposition 8.4]{MR980949} while $\mathsf{OCA}_{ARS}$ is consistent with their existence \cite[Theorem 3.1]{MR801036}; on the other hand, we also know that $\mathsf{OCA}_{T}$ alone does not imply $\mathsf{OCA}_{ARS}$ \cite[Theorem 3]{MR4228344}.

\subsection{Entangled and Increasing Sets}\label{sec:entangled}

Entangled sets were introduced in \cite{avraham1981martin} as particularly rigid linear orders witnessing the failure of $\mathsf{BA}$.  An uncountable set of reals $E \subseteq \mathbb{R}$ is said to be \emph{$n$-entangled}, for some positive integer $n > 0$, if for every uncountable collection $F$ of strictly increasing, pairwise disjoint $n$-tuples of elements of $E$ and for every $t \in {}^n 2$,
there are $x, y \in F$ such that, for every $i < n$, $x_i < y_i$ if and only if $t_i = 0$. An \emph{entangled} set is a set of reals which is $n$-entangled for every $n$.

Increasing sets were also introduced in \cite{avraham1981martin}, although they were explicitly defined in \cite{MR801036}. An uncountable set of reals $I \subseteq \mathbb{R}$ is said to be \emph{increasing} if for every positive integer $n$, for every uncountable collection $F$ of pairwise disjoint $n$-tuples of elements of $I$, there are $x, y \in F$ such that $x_i < y_i$ for every $i < n$. In particular, every entangled set is an increasing set. 

Todor{\v c}evi{\'c} \cite{MR799818} showed that entangled sets provide a powerful method to construct counterexamples relative to various problems,
such as the productivity of chain conditions and the square bracket partition relation.  The existence of entangled sets and increasing sets is independent of $\mathsf{ZFC}$: on one hand, these sets exist under $\mathsf{CH}$ \cite{MR799818, MR776283}; on the other hand, they do not exist under Baumgartner's Axiom. For recent applications and results regarding increasing and entangled sets, see, e.g., \cite{CARROY_LEVINE_NOTARO_2026, chapital2025nentangledsetn1entangledsets, martinezranero2025entangledsuslinlinesoga, marun2025baumgartnersaxiomsmallposets}.

Todor{\v c}evi{\'c}'s Open Coloring Axiom $\mathsf{OCA_T}$ implies that there are neither increasing sets nor $2$-entangled sets  \cite[Proposition 8.4]{MR980949}.
\section{Main result}\label{sec:main}

\begin{definition}\label{def:nsentangled}
A sequence $\vec{E} = \langle e_\xi \mid \xi < \omega_1\rangle$ of distinct reals is \emph{nonstationarily $2$-entangled} if $E = \mathrm{ran}(\vec{E})$ is $\aleph_1$-dense and, for every injective, monotone map without fixed points $f:E \rightharpoonup E$,  the set $\mathrm{dom}(f)$ is nonstationary with respect to $\vec{E}$.
\end{definition}

From now on, a subset $D \subseteq \mathrm{ran}(\vec{E})$ is (\emph{non})\emph{stationary with respect to $\vec{E}$} if $\{\xi < \omega_1 \mid e_\xi \in D\}$ is (non)stationary. Note that any enumeration of an $\aleph_1$-dense $2$-entangled set of reals is nonstationarily $2$-entangled.

It easily follows from Definition~\ref{def:nsentangled} that the range of a nonstationarily $2$-entangled sequence is not reversible. Hence, the existence of these sequences implies the failure of $\mathsf{BA}$. Moreover, their existence also implies the failure of $\mathsf{OCA}_{ARS}$:

\begin{lemma}\label{lemma:incomp}
If there exists a nonstationarily $2$-entangled sequence, then both $\mathsf{BA}$ and $\mathsf{OCA}_{ARS}$ fail. 
\end{lemma}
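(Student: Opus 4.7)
The plan is to establish both failures separately, each by extracting from the hypothesized axiom a forbidden injective, monotone, fixed-point-free partial map $E \rightharpoonup E$ with stationary domain, which Definition~\ref{def:nsentangled} rules out. Write $E = \mathrm{ran}(\vec{E})$. For $\mathsf{BA}$, it is enough to show $E$ is not reversible, since both $E$ and $E^*$ are $\aleph_1$-dense. Assume an order-isomorphism $\varphi: E \to E^*$; composing with negation yields an order-reversing bijection $h: E \to E$. A strictly decreasing self-map of a linear order has at most one fixed point (if $h(x) = x$ and $h(y) = y$ with $x < y$, then $h(x) > h(y)$ forces $x > y$), so removing it gives a partial map $E \rightharpoonup E$ that is injective, monotone (decreasing), and fixed-point-free, with cofinite---hence stationary---domain. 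This contradicts Definition~\ref{def:nsentangled}.

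For $\mathsf{OCA}_{ARS}$, I would use the $\sigma$-monotonicity consequence recalled in Section~\ref{sec:ARS}: every injective $f: A \to \mathbb{R}$ with $|A| = \aleph_1$ is $\sigma$-monotone. Define $g: E \to E$ by $g(e_\xi) := e_{\xi+1}$; this is visibly injective and fixed-point-free (the $e_\xi$ are distinct). If $\mathsf{OCA}_{ARS}$ held, one could decompose $E = \bigcup_n E_n$ with each $g \restriction E_n$ monotone. Since a countable union of nonstationary subsets of $\omega_1$ is nonstationary (the intersection of the witnessing clubs is still a club), some $n_0$ has $S := \{\xi < \omega_1 \mid e_\xi \in E_{n_0}\}$ stationary, and then $g \restriction \{e_\xi : \xi \in S\}$ is an injective, monotone, fixed-point-free partial map $E \rightharpoonup E$ with stationary domain---again contradicting Definition~\ref{def:nsentangled}.

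I do not anticipate any serious obstacle: both halves amount to direct applications of the contrapositive of Definition~\ref{def:nsentangled}. The only point warranting care is confirming that in each case the restriction inherits all three required properties (injectivity, monotonicity, absence of fixed points) and that its range lies in $E$---both automatic in the constructions above.
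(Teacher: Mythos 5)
Your proposal is correct and follows essentially the same route as the paper: the failure of $\mathsf{BA}$ is witnessed by the non-reversibility of $E$ (an order-isomorphism $E \cong E^*$ yields a decreasing, injective, fixed-point-free partial self-map of $E$ with stationary domain), and the failure of $\mathsf{OCA}_{ARS}$ comes from applying its $\sigma$-monotonicity consequence to an injective fixed-point-free self-map of $E$ and using $\sigma$-completeness of the nonstationary ideal. The only difference is that you spell out details the paper leaves implicit, e.g.\ the explicit choice $g(e_\xi) = e_{\xi+1}$ where the paper just picks an arbitrary such map.
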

\begin{proof}
The failure of $\mathsf{BA}$ was discussed above. Towards showing that $\mathsf{OCA}_{ARS}$ also fails, fix a nonstationarily $2$-entangled sequence $\vec{E}$ and let $E = \mathrm{ran}(\vec{E})$. Pick an injective total map without fixed points $f: E \rightarrow E$. It quickly follows from $\vec{E}$ being nonstationarily $2$-entangled that $f$ cannot be $\sigma$-monotone. But $\mathsf{OCA}_{ARS}$ implies that $f$ is $\sigma$-monotone (see Section~\ref{sec:ARS}). Thus, $\mathsf{OCA}_{ARS}$ fails.
\end{proof}

Given Lemma~\ref{lemma:incomp} and the paragraph preceding its statement, both Theorem~\ref{thm:mainintro} and Corollary~\ref{cor:intro} follow from the next theorem. The rest of the section is devoted to its proof. 

\begin{theorem}\label{thm:main}
 It is consistent relative to $\mathsf{ZF}$ that $\mathsf{MA_{\aleph_1}} + \mathsf{OCA}_T$ holds and there exists a nonstationarily $2$-entangled sequence.
 \end{theorem}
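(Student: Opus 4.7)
The plan is to build the model by a finite-support ccc iteration of length $\omega_2$ over a ground model $V$ of $\mathsf{GCH}$, starting from a nonstationarily $2$-entangled sequence that already exists in $V$, and to preserve this sequence throughout the iteration using the Abraham--Shelah explicit contradiction method.

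\textbf{Setup and iteration.} In $V \models \mathsf{GCH}$ one obtains, by the classical $\mathsf{CH}$ construction of Abraham--Shelah and Todor\v{c}evi\'{c}, an $\aleph_1$-dense $2$-entangled set of reals $E$; fix an arbitrary enumeration $\vec{E} = \langle e_\xi \mid \xi < \omega_1 \rangle$ of $E$. By the remark following Definition~\ref{def:nsentangled}, $\vec{E}$ is nonstationarily $2$-entangled in $V$. I would then define a finite-support iteration $P_{\omega_2} = \langle P_\alpha, \dot Q_\alpha \mid \alpha < \omega_2\rangle$ of ccc posets of size $\le \aleph_1$ with a standard bookkeeping assignment of length $\omega_2$ that handles, cofinally often, two kinds of tasks: (i) enumerated ccc posets of size $\le \aleph_1$ in the current intermediate model, to force $\mathsf{MA}_{\aleph_1}$; and (ii) the Todor\v{c}evi\'{c} open-graph poset \cite{MR980949} associated with each enumerated separable metrizable space and each open graph on it without uncountable clique, to force $\mathsf{OCA}_T$. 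Standard $\aleph_2$-cc/$\Delta$-system bookkeeping yields $\mathsf{MA}_{\aleph_1} + \mathsf{OCA}_T + 2^{\aleph_0} = \aleph_2$ in $V^{P_{\omega_2}}$.

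\textbf{Preservation.} The heart of the argument is showing that $\vec{E}$ remains nonstationarily $2$-entangled in $V^{P_{\omega_2}}$. I would isolate a strong preservation property $(\ast)$ of ccc forcings such that: $(\ast)$ is preserved under finite-support iterations of length $\le \omega_2$; $(\ast)$ implies preservation of nonstationary $2$-entanglement of $\vec{E}$; and every ccc forcing in a model in which $\vec{E}$ is nonstationarily $2$-entangled satisfies $(\ast)$. The non-trivial direction is the last one, proved by the explicit contradiction method of \cite{avraham1981martin}. Suppose toward contradiction that some ccc $P$ and a condition $p \in P$ force that $\dot f : E \rightharpoonup E$ is injective, monotone (say increasing), fixed-point-free, and that $\mathrm{dom}(\dot f)$ is stationary with respect to $\vec{E}$. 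Then one extracts, in $V$, a stationary set $S \subseteq \omega_1$, a regressive function $g$ on $S$ with $g(\xi) \ne \xi$, and conditions $p_\xi \le p$ with $p_\xi \Vdash \dot f(e_\xi) = e_{g(\xi)}$. By $\Delta$-system refinement on the $p_\xi$, stabilization of their order-types, and ccc-thinning one reduces to an uncountable pairwise-compatible subfamily of pairs $\langle (e_\xi, e_{g(\xi)}) \mid \xi \in S'\rangle$ sharing a single spatial type consistent with monotonicity (e.g., all $\snearrow$ among each other). Now $2$-entanglement of $\vec{E}$ in $V$ supplies two indices $\xi, \eta \in S'$ whose corresponding pairs realize the opposite type; picking a common extension of $p_\xi, p_\eta$ produces an explicit failure of the monotonicity forced by $p$, a contradiction.

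\textbf{Conclusion.} Combining the preservation with Lemma~\ref{lemma:incomp} yields $V^{P_{\omega_2}} \models \mathsf{MA}_{\aleph_1} + \mathsf{OCA}_T$ together with a nonstationarily $2$-entangled sequence, giving Theorem~\ref{thm:main}.

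\textbf{Main obstacle.} The delicate step is the preservation argument in an arbitrary ccc intermediate extension. The tension is that, to force $\mathsf{MA}_{\aleph_1}$, the iteration must accommodate \emph{every} ccc poset of size $\aleph_1$ that appears along the way, and some such posets do destroy full $2$-entanglement. The whole point of working with the weakened nonstationary notion of Definition~\ref{def:nsentangled} is to isolate exactly the amount of entanglement that survives arbitrary ccc forcing, and phrasing the preservation property $(\ast)$ so that it both ensures this survival and is stable under finite-support iteration is the technical crux of the proof.
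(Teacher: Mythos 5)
Your proposal has the right skeleton (ground model with a $2$-entangled set, finite-support ccc iteration of length $\omega_2$, bookkeeping for $\mathsf{MA}_{\aleph_1}$ and $\mathsf{OCA}_T$, preservation of $\vec{E}$ throughout), but the central claim of your preservation strategy---that \emph{every} ccc forcing over a model in which $\vec{E}$ is nonstationarily $2$-entangled satisfies a preservation property $(\ast)$---is false, and this is not a repairable detail: it inverts the logic of the explicit contradiction method. If every ccc poset preserved $\vec{E}$, the Abraham--Shelah machinery would be unnecessary and $\mathsf{MA}_{\aleph_1}$ would outright be consistent-by-default with such sequences; in fact there are, in intermediate models, ccc posets that add an injective monotone fixed-point-free map with stationary domain (Proposition~\ref{prop:explicit} of the paper takes exactly such a poset as its hypothesis). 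Your sketch of the ``non-trivial direction'' also fails on its own terms: conditions of an abstract ccc poset carry no finite-set structure, so there is no $\Delta$-system refinement of the $p_\xi$, and ccc does not yield an uncountable pairwise-compatible subfamily (that would require Knaster or $\sigma$-centeredness). Even granting two compatible conditions somewhere among the $p_\xi$, you cannot arrange that those particular two indices are the ones at which $2$-entanglement produces the opposite order type; ccc gives you \emph{some} compatible pair and entanglement gives you \emph{some} bad pair, with no way to make them coincide. Resolving precisely this mismatch is what the explicit contradiction method is for: one does not derive a contradiction inside $\mathcal{P}$, one forces (with a \emph{different}, preserving ccc poset) an uncountable set of pairs that all pairwise realize the bad type, and observes that the corresponding conditions form an uncountable antichain of $\mathcal{P}$, so that $\mathcal{P}$ is no longer ccc in the final model and $\mathsf{MA}_{\aleph_1}$ need not apply to it.

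Concretely, the paper's proof rests on two propositions you do not supply. Proposition~\ref{prop:preservation}: under $\mathsf{CH}$, for every uncountably chromatic open graph $G$ on a separable metrizable space there is an uncountable $Y$ such that $\mathcal{H}(Y,G)$ is ccc \emph{and preserves $\vec{E}$}; this requires choosing the points of $Y$ generically over an elementary chain (following \cite{MR4228344}), not just invoking ``the Todor\v{c}evi\'{c} open-graph poset,'' and it is unavoidable because $\mathsf{OCA}_T$ must be satisfied for \emph{every} such graph (you cannot dodge a graph the way you dodge a ccc poset) while $\mathsf{OCA}_T$ itself refutes genuine $2$-entanglement. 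Proposition~\ref{prop:explicit}: given a ccc $\mathcal{P}$ that does \emph{not} preserve $\vec{E}$, the set $W$ of pairs $(e_\xi,a_\xi)$ decided by conditions $p_\xi$ carries an uncountably chromatic open graph $H$ (the ``non-increasing'' pairs), and applying Proposition~\ref{prop:preservation} to $H$ yields a preserving ccc $\mathcal{Q}$ forcing an uncountable $H$-clique, i.e.\ an uncountable antichain in $\mathcal{P}$. Finally, preservation at limit stages is not obtained from an iterable property $(\ast)$ but argued directly: at countable cofinality a witness decomposes into countably many witnesses appearing at earlier stages, and at uncountable cofinality one passes to the closure of the graph of the witness, which appears at an earlier stage (the argument of Claim~\ref{claim:1}). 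Your closing paragraph correctly senses that the nonstationary weakening of entanglement is the point of the definition, but misattributes why: it is not that this weakening survives arbitrary ccc forcing (it does not), but that it can survive the \emph{specific, carefully constructed} posets used in the iteration, whereas full $2$-entanglement is necessarily destroyed once $\mathsf{OCA}_T$ holds.
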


Given a graph $G\subseteq [S]^2$ and a set $X \subseteq S$, we let $\mathcal{H}(X,G)$ be the poset of all finite subsets of $X$ which are $G$-cliques ordered by reverse inclusion. Moreover, given a nonstationarily $2$-entangled $\vec{E}$ and a forcing notion $\mathcal{P}$, we say that $\mathcal{P}$ \emph{preserves $\vec{E}$} if $\vec{E}$ is nonstationarily $2$-entangled in $V[H]$ for every $V$-generic filter $H \subseteq \mathcal{P}$.

 \begin{proposition}\label{prop:preservation}
Assume $\mathsf{CH}$.  Let $S$ be a separable metrizable space and let $G\subseteq [S]^2$ be an open graph with $\chi(G) > \aleph_0$. Then, for every nonstationarily $2$-entangled sequence $\vec{E}$, there exists an uncountable $Y \subseteq S$ such that $\mathcal{H}(Y,G)$ is ccc and preserves $\vec{E}$.
 \end{proposition}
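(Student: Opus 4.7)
The strategy is to produce $Y$ via a modified version of Todor{\v c}evi{\'c}'s classical analysis of open colorings under $\mathsf{CH}$, arranging at the same time for the preservation of $\vec E$.

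First, I recall the standard construction: assuming $\mathsf{CH}$ and $\chi(G) > \aleph_0$, one builds an uncountable $Y \subseteq S$ such that $\mathcal{H}(Y,G)$ is ccc (indeed Knaster). This proceeds by a transfinite recursion of length $\omega_1$ in which at each stage one picks $y_\xi \in S$ avoiding a countable ``bad set'' computed from previously-chosen points using a fixed countable base for $S$; the assumption $\chi(G) > \aleph_0$ prevents the recursion from getting stuck. The Knaster property of $\mathcal{H}(Y,G)$ then follows from a $\Delta$-system refinement combined with the openness of $G$.

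Second, to verify that a suitable choice of this $Y$ preserves $\vec E$, I argue by contradiction: suppose some $p_0 \in \mathcal{H}(Y,G)$ forces $\dot f: E \rightharpoonup E$ to be injective, monotone (WLOG increasing), fixed-point-free, with $\mathrm{dom}(\dot f)$ stationary with respect to $\vec E$. By ccc, the set $T = \{\alpha < \omega_1 : \exists q \le p_0,\ q \Vdash e_\alpha \in \mathrm{dom}(\dot f)\}$ is stationary in $V$. For each $\alpha \in T$, pick $q_\alpha \le p_0$ and $\beta_\alpha \ne \alpha$ with $q_\alpha \Vdash \dot f(e_\alpha) = e_{\beta_\alpha}$; then apply $\Delta$-system and pressing-down to thin $T$ to a stationary $T^* \subseteq T$ on which the $q_\alpha$ form a $\Delta$-system with common root $r$, uniform size, and a fixed combinatorial type relative to $(e_\alpha, e_{\beta_\alpha})$.

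Third, using the openness of $G$ and the structural properties of $Y$ from step one, I refine $T^*$ further to a stationary $T' \subseteq T^*$ on which the conditions $\langle q_\alpha : \alpha \in T'\rangle$ are pairwise compatible. Then for any $\alpha < \alpha'$ in $T'$, $q_\alpha \cup q_{\alpha'}$ forces both $\dot f(e_\alpha) = e_{\beta_\alpha}$ and $\dot f(e_{\alpha'}) = e_{\beta_{\alpha'}}$, so the forced monotonicity of $\dot f$ yields $\beta_\alpha < \beta_{\alpha'}$. Consequently, $g : \alpha \mapsto \beta_\alpha$ is, in $V$, an injective, monotone, fixed-point-free partial map $E \rightharpoonup E$ whose domain is stationary with respect to $\vec E$, directly contradicting the nonstationarily $2$-entangled property of $\vec E$.

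The main obstacle is precisely the step just described, namely obtaining \emph{stationary} (rather than merely uncountable) compatibility in step three: Knaster-style arguments naturally deliver uncountable compatible subfamilies, but not stationary ones. I expect to bridge this gap by weaving a preservation bookkeeping directly into the recursive construction of $Y$ from step one. Under $\mathsf{CH}$ there are only $\aleph_1$-many potential obstruction triples $(p_0, \dot f, \dot D)$; at stage $\xi$ of the recursion one chooses $y_\xi$ so as to destroy the $\xi$-th such triple explicitly, invoking the nonstationary $2$-entangledness of $\vec E$ in $V$ to verify that an appropriate $y_\xi$ always exists. This is the explicit contradiction method of Abraham--Shelah \cite{avraham1981martin} adapted to the present setting.
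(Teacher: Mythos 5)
Your reduction in steps two and three aims at the wrong target, and the patch you propose for the gap you (correctly) identify does not work. On the patch: under $\mathsf{CH}$ there are $2^{\aleph_1}\ge\aleph_2$ nice $\mathcal{H}(Y,G)$-names for a partial function from $E$ to $E$ (a nice name assigns one of $\aleph_1^{\aleph_0}$ antichains to each of $\aleph_1$ many pairs of values), not $\aleph_1$ many, so the ``obstruction triples'' cannot be enumerated in order type $\omega_1$. Worse, these names depend on the poset $\mathcal{H}(Y,G)$ and hence on the very set $Y$ being built, and since $\mathcal{H}(\{y_\eta\mid\eta<\xi\},G)$ is not a complete suborder of $\mathcal{H}(Y,G)$, there is no meaningful sense in which the choice of a single point $y_\xi$ ``destroys the $\xi$-th triple.'' In the paper the explicit contradiction method is used only at the level of the iteration (Proposition~\ref{prop:explicit}), not inside the construction of $Y$.

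On the reduction: to contradict nonstationary $2$-entangledness of $\vec E$ \emph{in $V$} you would indeed need $e_\alpha\mapsto e_{\beta_\alpha}$ to be an increasing witness on a \emph{stationary} set, and no $\Delta$-system or Knaster-type argument yields a stationary pairwise-compatible subfamily; this is why the step cannot be bridged as stated. The paper avoids the issue by proving a different, ground-model combinatorial statement: for \emph{every} uncountable family $\mathcal{F}$ of pairs $(s,z)$ with $s\in\mathcal{H}(Y,G)$ and $z\in K\times E$, where $K=\{e_\alpha\mid\alpha\in C\}$ and $C$ is the club of heights arising from an $\in$-increasing chain of elementary submodels, there are $(s,z),(s',z')\in\mathcal{F}$ with $s,s'$ compatible and $\{z,z'\}$ non-increasing. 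Just \emph{two} such members already kill the name (their common extension would force $\dot f$ to fail to be an increasing injection), so one never needs stationarily many coherent conditions; the upshot is that every increasing witness in the extension meets $K$ countably, hence has nonstationary domain. Producing those two members is the actual work: it requires choosing each $x_\alpha$ outside every $G$-independent set lying in $M_{\alpha+1}$, a base case showing every ground-model witness meets $K$ countably (Claim~\ref{claim:1}, via the closure of its graph and elementarity), an induction on the size of the conditions occurring in $\mathcal{F}$, and the limit-point operators $\omega_f^{\ssearrow}$, $\omega_f^{\snwarrow}$ with a case split on whether $\mathrm{ht}(s_n)$ lies above or below $\mathrm{ht}(z_0)$. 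None of this is present in, or recoverable from, the bookkeeping you sketch, so the proposal has an essential gap.
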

 \begin{proof}
 The idea for the following forcing comes from \cite{MR4228344}. It is a refinement of Todor\v{c}evi\'{c}'s classical forcing from \cite[Theorem 4.4]{MR980949} (see also \cite[Theorem 2.1]{MR1233818}). 
 
Let $\langle M_\alpha \mid \alpha < \omega_1\rangle$ be a continuous $\in$-increasing chain of countable elementary submodels of $H(\aleph_2)$ such that $\mathbb{R} \subseteq \bigcup_{\alpha < \omega_1} M_\alpha$ and $M_0$ contains $S, G$, and $ \vec{E} = \langle e_\xi \mid \xi < \omega_1\rangle$. 

For each $r \in \mathbb{R}$, let $\mathrm{ht}(r)$ be the least $\alpha < \omega_1$ such that $r \in M_{\alpha+1}$. Note that $r \not\in M_{\mathrm{ht}(r)}$ whenever $r \not\in M_0$. 

Let $E = \mathrm{ran}(\vec{E})$ and let $C \subseteq \omega_1$ be the closed unbounded set of all those $\alpha \in \omega_1$ such that $M_\alpha \cap \omega_1 = \alpha$. Fix a sequence $\langle x_\alpha \mid \alpha \in C\rangle$ such that, for each $\alpha \in C$,
\begin{enumerate}
\item $x_\alpha \in S \cap (M_{\alpha+2} \setminus M_{\alpha+1})$, and
\item $x_\alpha \not\in A$ for every $G$-independent $A \in \mathcal{P}(S) \cap M_{\alpha+1}$.
\end{enumerate}
We can construct such a sequence because, for each $\alpha \in C$, \[M_{\alpha+2} \vDash ``\chi(G) > \aleph_0 \text{ and } M_{\alpha+1} \text{ is countable}."\] We now prove that $Y = \{x_\alpha \mid \alpha \in C\}$ satisfies the desired properties.

With minor modifications, the classical argument due to Todor\v{c}evi\'{c} \cite[Theorem 2.1]{MR1233818} shows that $\mathcal{H}(Y, G)$ is ccc (actually, powerfully ccc). We are left to prove that $\mathcal{H}(Y, G)$ preserves $\vec{E}$.

Let $K = \{e_\alpha \mid \alpha \in C\}$. We now prove that $\mathcal{H}(Y, G)$ forces the domain of every increasing, injective map without fixed points from a subset of $E$ to $E$ to have at most countable intersection with $K$. The decreasing case is analogous. This clearly suffices to show that $\mathcal{H}(Y, G)$ preserves $\vec{E}$.  In what follows, a \emph{witness} is an increasing, injective map without fixed points from a subset of $E$ to $E$.

To prove that $\mathcal{H}(Y, G)$ forces the domain of every witness to have at most countable intersection with $K$, fix any uncountable family $\mathcal{F}$ of pairs $(s, z)$ with $s \in \mathcal{H}(Y,G)$ and $z \in (K \times E)\setminus \{(e,e) \mid e \in K\}$, towards showing there are distinct $(s,z), (s',z') \in \mathcal{F}$ such that $s$ and $s'$ are compatible and $\{z,z'\}$ is non-increasing. 

We treat $\mathcal{F}$  as a binary relation over $\mathcal{H}(Y,G)$ and $K\times E$. By passing to an uncountable subset of $\mathcal{F}$ if necessary, we can suppose that there exists an $n \ge 0$ such that every condition in the domain of $\mathcal{F}$ has size $n$. We now proceed by induction on $n$. 

For each $s \in \mathrm{dom}(\mathcal{F})$, let $(s_0, \dots, s_{n-1})$ be the increasing enumeration of $s$ according to $\mathrm{ht}$. Identifying each element of $\mathrm{dom}(\mathcal{F})$ with an element of $S^{n}$, we may assume that some fixed basic open set $U$ in $S^{n}$ separates each element of $\mathrm{dom}(\mathcal{F})$. 

Let us first treat the base case $n = 0$:
\begin{claim}\label{claim:1}
For every witness $f$, $\mathrm{dom}(f) \cap K$ is at most countable.
\end{claim}
\begin{proof}
Let $F \subseteq E^2$ be the closure of the graph of $f$ in $E^2$. Fix a $\gamma < \omega_1$ such that $F \in M_\gamma$. Let $D$ be the set 
\begin{multline*}
\{x \in E \mid (x,x) \not\in F \text{ and } \exists! y ((x,y) \in F) \text{ and } \forall x' \forall y ( (x' < x \text{ and}\\ (x,y) \in F) \Rightarrow (x',y) \not\in F)\}.
\end{multline*}
It is easy to see that $F \cap (D\times E)$ is the graph of a witness with domain $D$. Clearly, $D \in M_\gamma$ and, by elementarity, there is a closed and unbounded subset $B \subseteq \omega_1$ which belongs to $M_\gamma$ and  which is disjoint from $\{\xi < \omega_1 \mid e_\xi \in D\}$. Now observe that $C \cap {[\gamma, \omega_1)} \subseteq B$: indeed, for every $\beta < \omega_1$ with $\beta \ge \gamma$, the set $B \cap M_\beta$ is unbounded in $\omega_1 \cap M_\beta$ by elementarity, and therefore $\omega_1 \cap M_\beta \in B$; since $M_\alpha \cap \omega_1 = \alpha$ for every $\alpha \in C$, we conclude that  $C \cap {[\gamma, \omega_1)} \subseteq B$. Thus, $D \cap K$ is at most countable. 

Now let us show that $\mathrm{dom}(f) \setminus D$ is at most countable. Pick some $x \in \mathrm{dom}(f)$. If $x\not\in D$, then one of the following two non-mutually exclusive cases must hold: either there is $y \in E$ such that $y \neq f(x)$ and $(x,y) \in F$; or there is some $x' \in E$ with $x' < x$ such that $(x', f(x)) \in F$. The first case holds only if $x$ is a discontinuity point of $f$. Since every monotone real map has at most countably many discontinuity points, we conclude that there are at most countably many $x \in \mathrm{dom}(f)$ that satisfy the first case. The second case holds only if there exists some $\epsilon > 0$  such that $(x-\epsilon, x) \cap \mathrm{dom}(f) = \emptyset$. But there are at most countably many such $x$'s in $\mathrm{dom}(f)$ by the separability of $\mathbb{R}$. Overall, $\mathrm{dom}(f) \setminus D$ is at most countable. 

Since both $D \cap K$ and $\mathrm{dom}(f) \setminus D$ are at most countable, we conclude that so is $\mathrm{dom}(f) \cap K$.
\end{proof}

It follows from Claim~\ref{claim:1} that if $n = 0$, then we can indeed find two distinct $z,z'$ such that $\{z,z'\}$ is non-increasing and $(\emptyset, z), (\emptyset, z') \in \mathcal{F}$, as otherwise $\mathrm{ran}(\mathcal{F})$ would be an uncountable witness with domain contained in $K$. So now let us proceed with the inductive step: suppose that our claim holds for $n$, towards showing that it also holds for $n+1$. 

Suppose that for some $s \in \mathrm{dom}(\mathcal{F})$, the set $\mathcal{F}_s$ is uncountable. Then, we are done, as by Claim~\ref{claim:1} there must be two distinct $z,z' \in K \times E$ such that $\{z,z'\}$ is non-increasing and $(s,z), (s,z') \in \mathcal{F}$. Analogously, if there is some $z$ such that $\mathcal{F}^z$ is uncountable, then, by the ccc property of $\mathcal{H}(Y,G)$, there would be two distinct compatible conditions $s,s'$ such that $(s,z), (s',z) \in \mathcal{F}$---the singleton $\{z\} \subseteq \mathbb{R}^2$ is vacuously non-increasing. So we restrict to the case in which $\mathcal{F}$ is the graph of a partial injective map from $[S]^{n+1} \cap \mathcal{H}(Y,G)$ into $K\times E$, and we treat $\mathcal{F}$ accordingly.

Now, if there exists some $e \in K$ such that $\mathcal{F}^{-1}(\{e\} \times E)$ is uncountable, we are done: indeed, it would contain two compatible conditions by the ccc property of $\mathcal{H}(Y,G)$. Analogously, we are done if $\mathcal{F}^{-1}(K \times \{e\})$  is uncountable for some $e \in E$. Hence we can further restrict to the case in which $\mathrm{ran}(\mathcal{F})$ is the graph of a partial injective map from $K$ into $E$.

There are two (non-mutually exclusive) cases that need to be dealt with. Recall that, given $z\in \mathbb{R}^2$, we write $z = (z_0, z_1)$.\vspace{1em}

\paragraph{\textbf{Case 1:}} \emph{there are uncountably many $(s,z) \in \mathcal{F}$ such that $\mathrm{ht}(s_n) > \mathrm{ht}(z_0)$.} By passing to the given uncountable subset of $\mathcal{F}$, assume that for all $(s,z) \in \mathcal{F}$, $\mathrm{ht}(s_n) > \mathrm{ht}(z_0)$. 

Given some $p = (s, z) \in S^n \times E^2$, some open $U \subseteq  S^n \times E^2$ such that $p \in U$, and some $d \in \{\ssearrow, \snwarrow\}$, let
\[
U_p^d = \big\{(s', z') \in U \mid z \mathrel{d} z' \text{ and } \{s_i, s'_i\} \in G \text{ for all }i < n\big\}.
\]
If $f$ is a partial function from $S^n \times E^2$ to $S$, $p \in S^n \times E^2$, and $d \in \{\ssearrow, \snwarrow\}$, let
\begin{align*}
\omega_f^{d}(p) &= \bigcap \big\{\mathrm{cl}(f[U_p^d]) \mid U\subseteq S^n \times E^2 \text{ open and } p \in U\big\},\\
\omega_f(p) &= \omega_f^{\ssearrow}(p) \cup \omega_f^{\snwarrow}(p).
\end{align*}
Consider the map $f:S^n \times E^2 \rightharpoonup S$ such that $f(s \upharpoonright n, z) = s_n$ for each $(s, z) \in \mathcal{F}$. 
\begin{claim}\label{claim:2}
There are at most countably many $p \in \mathrm{dom}(f)$ such that $f(p) \not\in \omega_f(p)$.
\end{claim}
\begin{proof}
Suppose otherwise, towards a contradiction. Then, there is a basic open $U \subseteq S^n \times E^2$ and a basic open $V \subseteq S$ such that for uncountably many $p \in \mathrm{dom}(f)$ we have $p \in U$ and $f(p) \in V$ and both $f[U_p^{\ssearrow}]$ and $f[U_p^{\snwarrow}]$ are disjoint from $V$. 

By induction hypothesis, there are distinct $p, p' \in \mathrm{dom}(f) \cap U$ with $p = (s, z)$ and $p' = (s', z')$ such that  $\{z, z'\}$ is non-increasing and $ s, s'$ are compatible.  Again, since we are assuming $\mathrm{ran}(\mathcal{F})$ to be the graph of an injective map, we have either $p' \in U_p^{\ssearrow}$ or $p' \in U_p^{\snwarrow}$. But then, either $f[U_p^{\ssearrow}] \cap V \neq \emptyset$ or $f[U_p^{\snwarrow}]\cap V \neq \emptyset$, which is a contradiction.
\end{proof}

Hence, by shrinking $\mathcal{F}$ if necessary, we can assume that $f(p) \in \omega_f(p)$ for all $p \in \mathrm{dom}(f)$. 

Let $f_0$ be a countable dense subfunction of $f$. We claim that $\omega_f = \omega_{f_0}$. Pick some $p \in S^n \times E^2$. Clearly, since $f_0$ is a subfunction of $f$, we have $\omega_{f_0}(p) \subseteq \omega_f(p)$. Now fix some $u \in \omega_f^{\ssearrow}(p)$, towards showing $u \in \omega_{f_0}^{\ssearrow}(p)$---the same argument holds for $\omega_f^{\snwarrow}(p)$. By definition, for every open neighborhood $U$ of $p$ and every open neighborhood $V$ of $u$, the set $\mathrm{graph}(f) \cap (U_p^{\ssearrow} \times V)$ is nonempty. Since $U_p^{\ssearrow}$ is an open subset of $S^n \times E^2$, and since $\mathrm{graph}(f_0)$ is a dense subset of $\mathrm{graph}(f)$, we conclude that $\mathrm{graph}(f_0) \cap (U_p^{\ssearrow} \times V)$ is also nonempty, and therefore $u \in \omega_{f_0}^{\ssearrow}(p)$.

Let $\gamma < \omega_1$ be such that $f_0 \in M_\gamma$, and pick some $p = (s,z) \in \mathrm{dom}(f)$ such that $\mathrm{ht} \circ f(p) \ge \gamma$. We have  $f(p) \in \omega_f(p) = \omega_{f_0} (p)$. Either $f(p) \in \omega_{f_0}^{\ssearrow}(p)$ or $f(p) \in \omega_{f_0}^{\snwarrow}(p)$ (or both). Suppose that $f(p) \in \omega_{f_0}^{\ssearrow}(p)$---the other case is analogous. In particular, we have 
\[
f(p) \in \bigcup_{a \in E} \omega_{f_0}^{\ssearrow}(s, (z_0, a)).
\]

Let $\alpha = \mathrm{ht} \circ f(p)$. The set $A =\bigcup_{a \in E} \omega_{f_0}^{\ssearrow}(s, (z_0, a))$ belongs to $M_{\alpha}$, as all the parameters involved in the definition of $A$ belong to $M_{\alpha}$. Since $f(p) \in A$, it must be, by property (2) of the sequence $\langle x_\beta \mid \beta \in C\rangle$, that $A$ is not a $G$-independent set. Pick $u,v \in A$ with $\{u,v\} \in G$ and  $b,c \in E$  with $b \le c$ such that $u \in \omega^{\ssearrow}_{f_0}(s, (z_0, b))$ and $v \in \omega^{\ssearrow}_{f_0}(s, (z_0, c))$. Pick also two disjoint open sets $U,V \subseteq S$ with $u \in U$ and $v \in V$ such that $U \times V \subseteq G$. Let $q = (t, x) \in \mathrm{dom}(f_0)$ be such that $(z_0, b) \ssearrow x$ and $s, t$ are disjoint and compatible and $f(q) \in U$. Pick some open $O \subseteq S^n$ such that $s \in O$ and every $t' \in O$ is compatible with $t$. Now pick $q' = (t', x') \in \mathrm{dom}(f_0)$ such that $t' \in O$  and $f(q') \in V$ and $ (z_0, c) \ssearrow x' \ssearrow x$. Then $\{x,x'\}$ is non-increasing, the two conditions $t \cup \{f(q)\}$ and $t' \cup \{f(q')\}$ are compatible and both $(t \cup \{f(q)\}, x)$ and $(t' \cup \{f(q')\}, x')$ belong to $\mathcal{F}$. This finishes Case 1.
\vspace{1em}

\paragraph{\textbf{Case 2:}} \emph{there are uncountably many $(s,z) \in \mathcal{F}$ such that $\mathrm{ht}(s_n) < \mathrm{ht}(z_0)$.} By passing to the given uncountable subset of $\mathcal{F}$, assume that for all $(s,z) \in \mathcal{F}$, $\mathrm{ht}(s_n) < \mathrm{ht}(z_0)$.

Given some $p \in S^{n+1}$, and some open $U \subseteq S^{n+1}$ such that $p \in U$, let
\[
U_p = \big\{q \in U \mid \forall i \le n \ \{p_i, q_i\} \in G\}.
\]
If $g$ is a partial function from $S^{n+1}$ to $E^2$ and $p \in S^{n+1}$, let
\[
\omega_g^d(p) = \big\{z \in E^2 \mid z \in \mathrm{cl}(g[U_p] \cap d_z)\text{ for all } U \subseteq S^{n+1} \text{ open and }p \in U\big\}
\]
for each $d\in \{\snearrow, \ssearrow, \sswarrow, \snwarrow\}$. Finally, let \[\omega_g(p) = \bigcup \big\{\omega_g^d(p) \mid d \in \{\snearrow, \ssearrow, \sswarrow, \snwarrow\}\big\}.\]

Consider the map $g:S^{n+1} \rightharpoonup K\times E$ such that $g(s) = z$ for all $(s,z) \in \mathcal{F}$---i.e., $\mathcal{F}$ is the graph of $g$.

It follows from the ccc property of $\mathcal{H}(Y,G)$ and from an argument analogous to the one in Claim~\ref{claim:2}, that there are at most countably many $p \in \mathrm{dom}(g)$ such that $g(p) \not\in \omega_g(p)$. Hence, shrinking $\mathcal{F}$ if necessary, we can assume that $g(p) \in \omega_g(p)$ for all $p \in \mathrm{dom}(g)$. 

Let $g_0$ be a countable dense subfunction of $g$, and let $\gamma < \omega_1$ be such that $g_0 \in M_\gamma$. An argument analogous to the one after Claim~\ref{claim:2} implies $\omega^d_g = \omega^d_{g_0}$ for every $d$. Pick some $p \in \mathrm{dom}(g)$ such that $\mathrm{ht}( g(p)_0) \ge \gamma$. Let $\alpha = \mathrm{ht}(g(p)_0)$. Note that $\alpha \in C$ and $g(p)_0 = e_\alpha$. There exists a $d\in \{\snearrow, \ssearrow, \sswarrow, \snwarrow\}$ such that $g(p) \in \omega_{g_0}^d(p)$. Suppose that $g(p) \in \omega_{g_0}^{\snearrow}(p)$---the other three cases are analogous.
\begin{claim}\label{claim:3}
There are distinct $z,z' \in \omega^{\snearrow}_{g_0}(p)$ such that $\{z,z'\}$ is non-increasing.
\end{claim}
\begin{proof}
Suppose otherwise, towards a contradiction. Then, $\omega_{g_0}^{\snearrow}(p)$ would be the graph of an injective and increasing partial map. Consider the set $T = \{z \in \omega_{g_0}^{\snearrow}(p) \mid z_0 \neq z_1\}$. Note that $T$ is the graph of a witness, since we have avoided the possible fixed points of $\omega_{g_0}^{\snearrow}(p)$. Moreover,  since $g(p) \in \omega_{g_0}^{\snearrow}(p)$ and $g(p)_0 \neq g(p)_1$, we also have $g(p) \in T$.

Since all the parameters involved in the definition of $ \omega^{\snearrow}_{g_0}(p)$ (and hence of $T$) are in $M_{\alpha}$, we conclude that $T \in M_{\alpha}$. Hence, there exists a club $B\subseteq \omega_1$ with $B \in M_{\alpha}$ which is disjoint from $\{\xi < \omega_1 \mid e_\xi \in \mathrm{dom}(T)\}$. By elementarity, $\omega_1 \cap M_{\alpha} = \alpha \in B$. But since $g(p) \in T$, in particular $g(p)_0 = e_{\alpha}$ is in the domain of $T$. Hence the contradiction.
\end{proof}

Fix $z,z'$ given by Claim~\ref{claim:3}. Now, either $z \ssearrow z'$, or $z \snwarrow z'$, or $z_0 = z'_0$, or $z_1 = z'_1$. We treat only the case $z_1 = z'_1$ and $z_0 < z'_0$ as the others are treated analogously. Pick some $q \in \mathrm{dom}(g_0)$ disjoint from $p$ such that $q$ and $p$ are compatible and $z \snearrow g(q)$ and $g(q) \ssearrow z'$. Fix a $U\subseteq S^{n+1}$ such that $p \in U$ and every $s \in U$ is compatible with $q$. Now pick $r \in \mathrm{dom}(g_0) \cap U$ such that $ z' \snearrow g(r)$ and $g(q) \ssearrow g(r)$. Both pairs $(q, g(q))$ and $(r, g(r))$ belong to $\mathcal{F}$, the conditions $q, r$ are compatible, and $\{g(q),g(r)\}$ is non-increasing. This finishes Case 2 and, with it, the proof.
 \end{proof}

Proposition~\ref{prop:preservation} tells us that, under $\mathsf{CH}$, we can always force an uncountable clique in an uncountably chromatic open graph while preserving a nonstationarily $2$-entangled sequence. This would already suffice to prove that $\mathsf{OCA}_{T}$ is consistent with the existence of a nonstationarily $2$-entangled sequence. The next proposition is key to forcing $\mathsf{MA_{\aleph_1}}$ on top of $\mathsf{OCA}_{T}$ via Abraham and Shelah's explicit contradiction method.

 \begin{proposition}\label{prop:explicit}
 Assume $\mathsf{CH}$. Let $\vec{E}$ be a nonstationarily $2$-entangled sequence and let $\mathcal{P}$ be a ccc poset which does not preserve $\vec{E}$. Then, there is a ccc poset $\mathcal{Q}$ which preserves $\vec{E}$ and forces $\mathcal{P}$ not to be ccc.
 \end{proposition}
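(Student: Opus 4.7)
The plan is to extract from the failure of preservation an open ``crossing'' graph whose uncountable cliques correspond to antichains in $\mathcal{P}$, and then to realize $\mathcal{Q}$ as an instance of $\mathcal{H}(Y,G)$ via Proposition~\ref{prop:preservation}.

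First, I would fix $p_* \in \mathcal{P}$ and a $\mathcal{P}$-name $\dot{f}$ such that $p_* \Vdash$ ``$\dot{f}: E \rightharpoonup E$ is injective, monotone, without fixed points, with stationary domain''. By further refining, I may assume $p_*$ forces $\dot{f}$ to be increasing (the decreasing case is symmetric). Because ccc forcing preserves stationary sets, the set $T := \{\alpha < \omega_1 : \exists p \le p_*, \exists b \in E, \ p \Vdash \dot{f}(e_\alpha) = b\}$ is stationary in $V$. I would pick $p_\alpha \le p_*$ and $b_\alpha \in E$ with $p_\alpha \Vdash \dot{f}(e_\alpha) = b_\alpha$ for each $\alpha \in T$; note that $b_\alpha \ne e_\alpha$. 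Since $\dot{f}$ is forced injective and $\mathcal{P}$ is ccc, no $b \in E$ can be taken by uncountably many $b_\alpha$ (else the corresponding $p_\alpha$'s would form an uncountable antichain in $\mathcal{P}$), so $\alpha \mapsto b_\alpha$ is countable-to-one; a standard Fodor argument then lets me thin $T$ to a stationary subset on which $\alpha \mapsto b_\alpha$ is injective.

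Next, I would define the open graph $G \subseteq [\mathbb{R}^2]^2$ by $\{(x,y),(x',y')\} \in G \iff (x-x')(y-y') < 0$, and set $E^* := \{(e_\alpha, b_\alpha) : \alpha \in T\}$ as a subspace of $\mathbb{R}^2$. The key step, and what I expect to be the main obstacle, is to verify $\chi(G \restriction E^*) > \aleph_0$: if not, $E^*$ splits into countably many $G$-independent sets, each of which (using the separate injectivity of both coordinate projections on $E^*$) is the graph of a strictly increasing injective partial function $E \rightharpoonup E$ without fixed points. Since $T$ is stationary, one of these functions must have stationary domain with respect to $\vec{E}$, contradicting $\vec{E}$ being nonstationarily $2$-entangled. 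With uncountable chromatic number established, I would apply Proposition~\ref{prop:preservation} to $(E^*, G)$---legal under $\mathsf{CH}$---to obtain an uncountable $Y \subseteq E^*$ such that $\mathcal{Q} := \mathcal{H}(Y, G)$ is ccc and preserves $\vec{E}$.

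Finally, the $\mathcal{Q}$-generic produces, by standard density arguments, an uncountable $G$-clique $\dot{K} \subseteq Y$; the corresponding family $\{p_\alpha : (e_\alpha, b_\alpha) \in \dot{K}\}$ will be an antichain in $\mathcal{P}$, because any edge $\{(e_\alpha, b_\alpha),(e_\beta, b_\beta)\} \in G$ has (say) $e_\alpha < e_\beta$ and $b_\alpha > b_\beta$, so a common extension of $p_\alpha$ and $p_\beta$ would force $\dot{f}$ to contain two pairs violating its forced increasingness. Since $\mathcal{Q}$ preserves $\omega_1$, this antichain remains uncountable in $V^{\mathcal{Q}}$, yielding the required failure of ccc of $\mathcal{P}$.
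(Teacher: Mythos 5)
Your proposal is correct and follows essentially the same route as the paper: decide $\dot{f}$ on a stationary set of indices, form the planar set of pairs $(e_\alpha,b_\alpha)$ with the open ``crossing'' graph, show it is uncountably chromatic using nonstationary $2$-entanglement, and apply Proposition~\ref{prop:preservation} to get a $\vec{E}$-preserving ccc poset whose generic clique is an uncountable antichain of $\mathcal{P}$. The only (harmless) difference is that you thin the stationary set up front to make $\alpha\mapsto b_\alpha$ injective via a club argument, whereas the paper keeps the full set and absorbs the non-injectivity inside the chromatic-number claim by discarding countable sets from each color class.
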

 \begin{proof}
 Fix a nonstationarily $2$-entangled sequence $\vec{E} = \langle e_\xi \mid \xi < \omega_1\rangle$ and let $\mathcal{P}$ be a ccc poset that does not preserve $\vec{E}$. Let $E = \mathrm{ran}(\vec{E})$. There must exist a $\mathcal{P}$-name $\dot{f}$ and a condition $p \in \mathcal{P}$ such that
\begin{multline*}
 p \Vdash \dot{f}: E \rightharpoonup  E  \text{ is an injective monotone map without fixed points}\\\text{and } \mathrm{dom}(\dot{f}) \text{ is stationary with respect to } \vec{E}.
\end{multline*}
Let us also suppose that $p$ forces $\dot{f}$ to be increasing (the other case is analogous). The set
\[
X = \big\{\xi < \omega_1 \mid \exists q \in \mathcal{P}\ \exists a \in E \ (q \le p \text{ and } q \Vdash \dot{f}(e_\xi) = a)\big\}
\]
is stationary. For each $\xi \in X$, pick $p_\xi \le p$ and $a_\xi \in E$ such that $p_\xi \Vdash \dot{f}(e_\xi) = a_\xi$. Since $p$ forces $\dot{f}$ not to have fixed points, we have $a_\xi \neq e_\xi$ for each $\xi \in X$.

Consider the set $W = \{(e_\xi, a_\xi) \mid \xi \in X\} \subseteq E^2$ and let $H\subseteq [W]^2$ be the following open graph: $\{z,z'\} \in H$ if and only if $z,z' \in W$ and either $z \ssearrow z'$ or $z \snwarrow z'$. We claim that $H$ is uncountably chromatic. 
\begin{claim}
$\chi(H) > \aleph_0$.
\end{claim}
\begin{proof}
Suppose otherwise, towards a contradiction, and let $(f_n)_{n\in\omega}$ be a sequence of increasing maps such that $\bigcup_n f_n = W$. We now prove that for each $n\in\omega$, there exists an at most countable set $D_n \subseteq E$ such that $f_n \upharpoonright (E\setminus D_n)$ is injective.

First note that for each $a \in E$ the set $L_a = \{\xi \in X \mid a_\xi = a\}$ is at most countable, since $\{p_\xi \mid \xi \in L_a\}$ is an antichain of $\mathcal{P}$.

Moreover, as $f_n$ is monotone, there are at most countably many $a \in E$ such that the set $f_n^{-1}(\{a\})$ has more than one element. Hence, the set
\[
D_n = \bigcup \big\{f_n^{-1}(\{a\}) \mid a \in E \text{ and } |f_n^{-1}(\{a\})| > 1\big\}
\]
is at most countable, and $f_n \upharpoonright (E\setminus D_n)$ is injective.

Since we are assuming $\vec{E}$ to be nonstationarily $2$-entangled, this means that $\mathrm{dom}(f_n)$ is nonstationary with respect to $\vec{E}$ for each $n$, which is a contradiction, as $X$ would be the union of countably many nonstationary sets. 
\end{proof}
 
 By Proposition~\ref{prop:preservation}, there exists a ccc poset $\mathcal{Q}$ that preserves $\vec{E}$ and adds an uncountable $H$-clique over $W$---that is, it adds an uncountable, injective, and decreasing map whose graph is contained in $W$. But this means that $\mathcal{Q}$ forces the existence of an uncountable antichain in $\mathcal{P}$. 
 \end{proof}
 
Now we are ready to prove Theorem~\ref{thm:main}.
 \begin{proof}[Proof of Theorem~\ref{thm:main}]
We start with a model of $V=L$ and we let $\vec{E}$ be any enumeration of some $\aleph_1$-dense $2$-entangled set---recall that the existence of an entangled set follows from $\mathsf{CH}$ (see Section~\ref{sec:entangled}). 

Let us briefly recall the well-known finite support iteration $\langle \mathcal{P}_\alpha, \dot{\mathcal{Q}}_\alpha \mid \alpha < \omega_2\rangle$ that forces $\mathsf{MA_{\aleph_1}}+\mathsf{OCA}_T$ (see, e.g., \cite[p. 142]{MR1233818}): first fix a sequence witnessing $\diamondsuit (\{\alpha < \omega_2 \mid \mathrm{cof}(\alpha) = \omega_1\})$; if $\alpha$ has countable cofinality, we let $\dot{\mathcal{Q}}_\alpha$ be the $\mathcal{P}_\alpha$-name designated by the bookkeeping function---whose role is to ensure that $\mathsf{MA}_{\aleph_1}$ is forced---which is forced to be a ccc poset; if $\alpha$ has uncountable cofinality, and $\dot{G}$ is the $\mathcal{P}_\alpha$-name, guessed by our diamond sequence, which is forced to be an uncountably chromatic open graph over some separable metrizable space, we let  $\dot{\mathcal{Q}}_\alpha$ be the $\mathcal{P}_\alpha$-name of some ccc poset that adds an uncountable clique to $\dot{G}$. However, we have an additional concern: to preserve $\vec{E}$. Next, we see how we modify the iteration in order to address this problem.

Suppose that $\alpha$ has countable cofinality and that $\mathcal{P}_\alpha$ is defined and preserves $\vec{E}$. Let $\dot{\mathcal{R}}$ be the designated $\mathcal{P}_\alpha$-name for a ccc poset given by our bookkeeping function. Using a routine mixing argument, we can devise $\dot{\mathcal{Q}}_\alpha$ such that, for every $p \in \mathcal{P}_\alpha$, if $p \Vdash_{\mathcal{P}_\alpha} ``\dot{\mathcal{R}}$ preserves $\vec{E}$", then $p \Vdash_{\mathcal{P}_\alpha} \dot{\mathcal{Q}}_\alpha = \dot{\mathcal{R}}$; and if $p \Vdash_{P_\alpha} ``\dot{\mathcal{R}}$ does not preserve $\vec{E}$", then $p$ forces $\dot{\mathcal{Q}}_\alpha$ to be ccc, preserve $\vec{E}$, and destroy the ccc property of $\dot{\mathcal{R}}$---in the latter case we use Proposition~\ref{prop:explicit}.

Now suppose that $\alpha$ has uncountable cofinality and that $\mathcal{P}_\alpha$ is defined and preserves $\vec{E}$. Let $\dot{\mathcal{Q}}_\alpha$ be the $\mathcal{P}_\alpha$-name for the poset given by Proposition~\ref{prop:preservation} that preserves $\vec{E}$ and adds an uncountable clique to the uncountably chromatic open graph guessed by our diamond sequence. 

We show that $\mathcal{P}_\alpha$ preserves $\vec{E}$ for all $\alpha \le \omega_2$, by induction on $\alpha$. If $\alpha$ is successor, then the preservation of $\vec{E}$ follows by inductive hypothesis and by construction.

Now suppose that $\alpha$ is limit of countable cofinality. Let $G\subseteq\mathcal{P}_\alpha$ be a $V$-generic filter and fix some injective monotone map $f:E\rightharpoonup E$ without fixed points in $V[G]$. There are $(f_n)_{n\in\omega}$ such that $f = \bigcup_n f_n$ and $f_n \in V[G \cap \mathcal{P}_{\alpha_n}]$ for some $\alpha_n < \alpha$. By induction hypothesis, the domain of each $f_n$ is nonstationary with respect to $\vec{E}$ (in $V[G \cap \mathcal{P}_{\alpha_n}]$ and, a fortiori, in $V[G]$). Hence, the domain of $f$ is nonstationary with respect to $\vec{E}$ in $V[G]$ and $\mathcal{P}_\alpha$ preserves $\vec{E}$.

Now suppose that $\alpha$ is limit of uncountable cofinality. Let $G\subseteq\mathcal{P}_\alpha$ be a $V$-generic filter and fix some injective monotone map $f:E\rightharpoonup E$ without fixed points in $V[G]$. If we let $F$ be the closure of the graph of $f$, there exists some $\beta < \alpha$ such that $F \in V[G \cap \mathcal{P}_\beta]$. Arguing as in Claim~\ref{claim:1}, there is a set $D\subseteq E$ in $V[G \cap \mathcal{P}_\beta]$ such that $F \cap (D\times E)$ is the graph of an injective, monotone map without fixed points and $\mathrm{dom}(f) \setminus D$ is at most countable. By induction hypothesis, $D$ is nonstationary with respect to $\vec{E}$ in $V[G \cap \mathcal{P}_\beta]$, and, a fortiori, in $V[G]$. We conclude that $\mathrm{dom}(f)$ is also nonstationary with respect to $\vec{E}$ in $V[G]$. Overall, $\mathcal{P}_\alpha$ preserves $\vec{E}$ and we are done.
 \end{proof}
\printbibliography

\end{document}